\numberwithin{equation}{section}
\numberwithin{figure}{section}
\theoremstyle{plain}
\newtheorem{thm}{\protect\theoremname}[section]
  \theoremstyle{definition}
  \newtheorem{defn}[thm]{\protect\definitionname}
  \theoremstyle{plain}
  \newtheorem{lem}[thm]{\protect\lemmaname}
  \theoremstyle{definition}
  \newtheorem{example}[thm]{\protect\examplename}
  \theoremstyle{plain}
  \newtheorem{cor}[thm]{\protect\corollaryname}
\theoremstyle{plain}
\newcommand{\R}{\mathbb{R}}
\newcommand{\dx}{{\mathrm d}x}
\newcommand{\core}{{C_0^\infty}}
  \providecommand{\corollaryname}{Corollary}
  \providecommand{\definitionname}{Definition}
  \providecommand{\examplename}{Example}
  \providecommand{\lemmaname}{Lemma}
\providecommand{\theoremname}{Theorem}
\begin{document}

\title[Profile decompositions]{Trudinger-Moser inequality with remainder terms}

\author{Cyril Tintarev}

\address{Department of Mathematics, Uppsala University, P.O.Box 480, 75 106
Uppsala, Sweden }

\email{tintarev@math.uu.se}

\thanks{The research was in part supported by Vetenskapsrådet - Swedish Research
Council.}

\subjclass[2000]{35J61, 35J75, 35A23. }

\keywords{Trudinger-Moser inequality, borderline Sobolev imbeddings, singular
elliptic operators, remainder terms, spectral gap, virtual bound state,
Hardy-Sobolev-Mazya inequality.}
\begin{abstract}
The paper gives the following improvement of the Trudinger-Moser inequality:

\begin{equation}
\sup_{\int_{\Omega}|\nabla u|^{2}\dx-\psi(u)\le1,\, u\in\core(\Omega)}\int_{\Omega}e^{4\pi u^{2}}\dx<\infty,\;\Omega\in\R^{2},\label{eq:TMmod}
\end{equation}
related to the Hardy-Sobolev-Mazya inequality in higher dimensions.
We show (\ref{eq:TMmod}) with $\psi(u)=\int_{\Omega}V(x)u^{2}\dx$
for a class of $V>0$ that includes 

\[
V(r)=\frac{1}{4r^{2}(\log\frac{1}{r})^{2}\max\lbrace\sqrt{\log\frac{1}{r}},1\rbrace}\;,
\]
which refines two previously known cases of (\ref{eq:TMmod}) proved
by Adimurthi and Druet \cite{AdiDruet} and by Wang and Ye \cite{WangYe}.
In addition, we verify (\ref{eq:TMmod}) for $\psi(u)=\lambda\|u\|_{p}^{2}$,
as well as give an analogous improvement for the Onofri-Beckner inequality.
\end{abstract}
\maketitle
\smallskip{}


\section{\label{intro}Introduction.}

The Trudinger-Moser inequality (\cite{Yud,Peetre,Poh,Trud,Moser})
\begin{equation}
\sup_{\int_{\Omega}|\nabla u|^{2}\dx\le1,\, u\in\core(\Omega)}\int_{\Omega}e^{4\pi u^{2}}\dx<\infty\;,\label{eq:TM}
\end{equation}
where $\Omega\subset\R^{2}$ is a bounded domain, is an analog of
the limiting Sobolev inequality in $\R^{N}$ with $N\ge3$: 

\begin{equation}
\sup_{\int_{\R^{N}}|\nabla u|^{2}\dx\le1,\, u\in\core(\R^{N})}\int|u|^{2^{*}}\dx\,<\infty,\;2^{*}=\frac{2N}{N-2}.\label{eq:Sob}
\end{equation}
\[
\]
We recall that restriction of inequalities involving the gradient
norm to bounded domains is of essense when $N=2$, since the completion
of $\core(\R^{2})$ in the gradient norm is not a function space,
and, moreover, since $\int_{B}|\nabla u|^{2}\dx$ on the unit disk
$B\subset\R^{2}$ coincides with the quadratic form of the Laplace-Beltrami
operator on the hyperbolic plane (a \emph{complete} \emph{non-compact}
Riemannian manifold) when expressed in the coordinates of the Poincaré
disk. 

Both limiting Trudinger-Moser and Sobolev inequalities are optimal
in the sense that they are false for any nonlinearity that grows as
$s\to\infty$ faster than $e^{4\pi s^{2}}$resp $s^{2^{*}}$. Inequality
(\ref{eq:Sob}) is also false if the nonlinearity $|u|^{2^{*}}$ is
multiplied by an unbounded radial monotone function, although (\ref{eq:TM})
on the unit disk holds also when the integrand is replaced by $\frac{e^{4\pi u^{2}}-1}{(1-r)^{2}}$
(\cite{AdiTi,MansanIneq}). $ $$ $

This paper studies another refinement of (\ref{eq:TM}), whose analogy
in the case $N\ge3$ is the Mazya\textquoteright{}s refinement of
(\ref{eq:Sob}), known as Hardy-Sobolev-Mazya inequality ({[}15{]}): 

\begin{equation}
\sup_{\int_{\R^{N}}|\nabla u|^{2}\dx-\int_{\R^{N}}V_{m}(x)u^{2}\dx\le1,\, u\in\core(\R^{N})}\int_{\R^{N}}|u|^{2^{*}}\dx\,<\infty,\label{eq:HSM}
\end{equation}

\[
\]
where 

\[
V_{m}(x)=\left(\frac{m-2}{2}\right)^{2}\frac{1}{|x_{1}+\dots+x_{m}|^{2}},\; m=1,...,N-1.
\]
It is false when $m=N$, and similarly, inequality (\ref{eq:TMmod})
does not hold with $\psi(u)=\int_{B}V(|x|)u^{2}\dx$, if $V$ is the
two-dimensional counterpart of the Hardy's radial potential, the Leray's
potential 

\[
V_{\mathrm{Leray}}(r)=\frac{1}{4r^{2}(\log\frac{1}{r})^{2}}.
\]
When $\psi(u)=\int_{\Omega}V(x)u^{2}\dx$ , inequality (\ref{eq:TMmod})
has been already established for two specific potentials $V$. In
one case, proved by Adimurthi and Druet \cite{AdiDruet}, $V(x)=\lambda<\lambda_{1}$,
and $\lambda_{1}$ is the first eigenvalue of the Dirichlet Laplacian
in $\Omega$. Note only that the inequality stated as a main result
in \cite{AdiDruet} is formally weaker, but it immediately implies
(\ref{eq:TMmod}) with $V(x)=\lambda<\lambda_{1}$ via an elementary
argument). It was conjectured by Adimurthi \cite{AdiPC} that the
inequality remains valid whenever one replaces $\int_{\Omega}\lambda u^{2}\dx$
with a general weakly continuous functional $\psi$, as long as $\|\nabla u\|_{2}^{2}-\psi(u)>0$
for $u\neq0$. Another known case of the inequality (\ref{eq:TMmod}),
with $\psi(u)=\int_{B}\frac{u^{2}}{(1-r^{2})^{2}}\dx$, is due to
Wang and Ye \cite{WangYe}. Note that the result of Wang and Ye involves
a non-compact remainder term, and that via conformal maps it extends
to general domains. 

In deciding about the natural counterpart of the Hardy-Sobolev-Mazya
inequality in the two-dimensional case, we have to make a choice,
which is insignificant in the case $N\ge3$, between using the functional
$\int e^{4\pi u^{2}}$ and the Orlitz norm $\|u\|_{\mathrm{Orl}}$
associated with the integrand (in terms of the standard definition,
with the function $e^{4\pi s^{2}}-1$). The difference between the
case $N\ge3$ and $N=2$ is in the fact that (\ref{eq:HSM}) can be
equivalently rewritten as 

\[
\int_{\R^{N}}|\nabla u|^{2}\dx-\int_{\R^{N}}V_{m}(x)u^{2}\dx\ge C\|u\|_{2^{*},}^{2}
\]
while from 

\begin{equation}
\int_{\Omega}|\nabla u|^{2}\dx-\psi(u)\ge C\|u\|_{\mathrm{Orl}}^{2}\label{eq:triv}
\end{equation}
for $N=2$ inequality (\ref{eq:TMmod}) does not follow, and insteaad
one has its weaker version, with the bound on $\int_{\Omega}e^{Cu^{2}}\dx$
with $some$ $C$. In particular, in the case of Adimurthi-Druet,
$V(x)=\lambda<\lambda_{1}$, inequality (\ref{eq:triv}) is completely
trivial while their actual result is very sharp. This example explains
why we, following Wang and Ye, treat (\ref{eq:TMmod}), and not (\ref{eq:triv}),
as a natural counterpart of (\ref{eq:HSM}).

The objective of this paper is to prove the inequality (\ref{eq:TMmod})
with the more general (and in particular, stronger) remainder term
$\psi(u)$ than in the two known cases. In Section 2 we study the
case $p=2$ and the radial potential on a unit disk, in Section 3
we extend the result to general bounded domains and to the values
$p>2$. In Section 4 we give corollaries to the inequalities, prove
a related refinement of Onofri-Beckner inequality, and list some open
problems. 

In what follows, $B$ will denote an open unit disk, $||\cdot\|_{p}$
will mean the $L^{p}(\Omega)$-norm when the domain is specified,
and the subspace of radial functions of, say, Sobolev space $H_{0}^{1}(B)$
will be denoted $H_{0,\mathrm{rad}}^{1}(B).$

\section{Remainder with a singular potential. }

\subsection*{Ground state alternative. }

We summarize first some relevant results on positive elliptic operators
with singular potentials, drawing upon \cite{pt-JFA}. 

Let $\Omega\subset\R^{N}$ be a domain, and let $V$ be a continuous
function in $\Omega$. We consider the functional 

\begin{equation}
Q_{V}(u)=\int_{\Omega}|\nabla u|^{2}\dx-\psi(u),\;\psi(u)=\int_{\Omega}V(x)u^{2}\dx,\; u\in\core(\Omega).\label{eq:QV}
\end{equation}
Assuming that $Q_{V}\ge0$, one says that $\varphi\neq0$ is a \emph{ground
state} of the quadratic form $Q_{V}$ if there exists a sequence $u_{k}\in\core(\Omega)$,
convergent to $\varphi$ in $H_{\mathrm{loc}}^{1}(\Omega)$, such
that $Q_{V}(u_{k})\to0$. Ground states are sign definite and, up
to a constant multiple, unique in the class of positive solutions
(that is, positive solutions without global integrability requirements
or boundary conditions). If, additionally, $\varphi\in H_{0}^{1}(\Omega)$,
then $\varphi$ is a minimizer for the Rayleigh quotient 

\[
\inf_{u\in H_{0}^{1}(\Omega),u\neq0}\frac{\|\nabla u\|_{2}^{2}}{\int_{\Omega}V(x)u^{2}\dx}\,.
\]
There are ground states, however, for which $\|\nabla\varphi\|_{2}=\infty$.
This is the case, in particular, for the ground state $\varphi(x)=\sqrt{\log\frac{1}{|x|}}$
in the case of Leray potential 

\[
Q_{V}=\int_{B}|\nabla u|^{2}\dx-\int_{B}V_{\mathrm{Leray}}u^{2}\dx.
\]
(Leray inequality, \cite{Leray}, states that this form is nonnegative.)
Similarly, Hardy inequality in $\R^{N}$, $N\ge3$, with the radial
potential $V_{N}$ admits a ground state $\varphi(x)=|x|^{\frac{2-N}{2}}$,
whose gradient norm is infinite as well. 

Existence of a ground state is connected to the property of weak coercicity.
The form (\ref{eq:QV}) is called weakly coersive if there exists
an open set $E$ relatively compact in $\Omega$ and a constant $\delta>0$,
such that 

\[
Q_{V}(u)\ge\delta\left(\int_{E}u\dx\right)^{2},\; u\in\core(\Omega).
\]
An equivalent criterion of weak coercivity (see \cite{TakTin}) is
a seemingly stronger condition that there exists a continuous function
W > 0 such that 

\[
Q_{V}(u)\ge\int_{\Omega}W(x)(|\nabla u|^{2}+u{}^{2})\dx,\; u\in\core(\Omega).
\]
It is well known that the form (\ref{eq:QV}) is nonnegative if and
only if it admits a positive solution. However, not any positive solution
is a ground state, and in fact, existence of a ground state and weak
coercivity for a nonnegative form are mutually exclusive. 
\begin{thm}
\textbf{(Ground state alternative of Murata {[}17, 20{]})} A nonnegative
functional (\ref{eq:QV}) is either weakly coercive or has a ground
state. 
\end{thm}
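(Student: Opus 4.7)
The plan is to prove the dichotomy by establishing the substantive direction --- failure of weak coercivity forces the existence of a ground state --- since the other direction is short. If a ground state $\varphi$ coexisted with weak coercivity, sign-definiteness of $\varphi$ would supply a relatively compact open $E$ with $\int_{E}\varphi \neq 0$; for the approximating sequence $u_{k}\in\core(\Omega)$ converging to $\varphi$ in $H^{1}_{\mathrm{loc}}$ one has $\int_{E}u_{k}\to\int_{E}\varphi\neq 0$, so the coercivity bound $Q_{V}(u_{k})\ge\delta\left(\int_{E}u_{k}\right)^{2}$ stays bounded below, contradicting $Q_{V}(u_{k})\to 0$.

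For the substantive direction the key device is the \emph{ground state transform}. The equivalence, recalled just before the statement, between non-negativity of $Q_{V}$ and existence of a positive solution yields a strictly positive $C^{2}$ function $u_{0}$ with $-\Delta u_{0}=Vu_{0}$ on $\Omega$. Substituting $u=u_{0}w$ for $u\in\core(\Omega)$ and integrating by parts gives
\[
Q_{V}(u_{0}w)=\int_{\Omega}u_{0}^{2}\,|\nabla w|^{2}\,\dx,
\]
so $Q_{V}$ is gauged into a manifestly non-negative weighted Dirichlet energy whose weight $u_{0}^{2}$ is bounded above and below on every compact subset of $\Omega$.

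Next, I fix one relatively compact open set $K_{0}\Subset\Omega$. Failure of weak coercivity applied with $E=K_{0}$ and $\delta=1/n$ supplies $v_{n}\in\core(\Omega)$ --- taken non-negative after exploiting $Q_{V}(|u|)\le Q_{V}(u)$ and regularising --- with $\int_{K_{0}}v_{n}\,\dx=1$ and $Q_{V}(v_{n})<1/n$. Writing $v_{n}=u_{0}w_{n}$, the transformed energy $\int u_{0}^{2}|\nabla w_{n}|^{2}\,\dx$ tends to zero, so $\nabla w_{n}\to 0$ in $L^{2}_{\mathrm{loc}}(\Omega)$. A local Poincar\'e inequality combined with the $K_{0}$-normalization then pins the averages of $w_{n}$ on every compact $K\Subset\Omega$ to the positive constant $c:=1/\int_{K_{0}}u_{0}\,\dx$; a diagonal subsequence yields $w_{n}\to c$ in $H^{1}_{\mathrm{loc}}$, and therefore $v_{n}\to\varphi:=cu_{0}$ in $H^{1}_{\mathrm{loc}}$ with $Q_{V}(v_{n})\to 0$, so $\varphi$ is a ground state.

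The hardest step is the extraction of the $H^{1}_{\mathrm{loc}}$-limit: vanishing of the weighted Dirichlet energy alone only gives local constancy of the $w_{n}$, and one must propagate the normalization from $K_{0}$ across the exhaustion to conclude that these local constants agree and are non-zero. The ground state transform is exactly what converts the indefinite form $Q_{V}$ into a positive weighted energy on which this elliptic compactness argument can run; without it, the indefiniteness of $V$ would block any direct extraction of a limit.
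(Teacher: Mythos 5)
The paper does not prove this theorem at all: it is quoted from Murata and from Pinchover--Tintarev \cite{pt-JFA}, so there is no in-paper argument to compare against. Your proof is essentially the standard one from \cite{pt-JFA} and I find it correct. The exclusivity direction is fine granted the sign-definiteness of ground states (stated in the paper as known); alternatively you could use the equivalent weighted form of weak coercivity, $Q_{V}(u)\ge\int_{\Omega}W(|\nabla u|^{2}+u^{2})\dx$, which forces $u_{k}\to0$ in $H^{1}_{\mathrm{loc}}$ directly. In the substantive direction, the inputs you rely on --- the Allegretto--Piepenbrink equivalence of nonnegativity with existence of a positive solution, and the ground state transform --- are both stated in the paper as background, so invoking them is legitimate. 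Two small remarks: the reduction to nonnegative $v_{n}$ is unnecessary (the normalization $\int_{K_{0}}v_{n}\dx=1$ together with the Poincar\'e--Wirtinger step already forces the limiting constant $c=1/\int_{K_{0}}u_{0}\dx$ to be positive, whatever the sign of $v_{n}$), and the propagation of the constant from $K_{0}$ to an arbitrary compact $K$ needs the connectedness of $\Omega$ (true for a domain) so that $K_{0}\cup K$ sits inside a connected relatively compact open set on which Poincar\'e--Wirtinger applies; once that is said, no diagonal extraction is actually needed, since the averages converge to the same $c$ on every such set.
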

If the form (\ref{eq:QV}) is nonnegative (and thus admits a positive
solution $v$) it can be represented as an integral of a positive
function. This representation is known as \emph{ground state transform}
or \emph{Jacobi identity}:

\[
\int_{\Omega}|\nabla u|^{2}\dx-\int_{\Omega}V(x)u^{2}\dx=\int_{\Omega}v^{2}|\nabla\frac{u}{v}|^{2}\dx.
\]

\subsection*{Remainder in the Trudinger-Moser inequality, radial case. }
\begin{defn}
We say that a radial function on the unit disk $V(|x|)\in\mathcal{V}$
if $V(r)$ is a nonnegative continuous function on $(0,1)$ and the
function $r\mapsto(1-r^{2})^{2}V(r)$ is nonincreasing.\end{defn}
\begin{lem}
\label{lem:radred}If $V\in\mathcal{V}$ then 

\begin{equation}
\sup_{u\in H_{0}^{1}(B),\; Q_{V}(u)\le1}\int_{B}e^{4\pi u^{2}}\dx=\sup_{u\in H_{0,\mathrm{rad}}^{1}(B),\; Q_{V}(u)\le1}2\pi\int_{B}e^{4\pi u(r)^{2}}r\mathrm{d}r.\label{eq:radred}
\end{equation}
\end{lem}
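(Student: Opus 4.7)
My plan is to use the conformal identification of $B$ with the Poincar\'e disk model of the hyperbolic plane and then apply Schwarz symmetrization with respect to the hyperbolic measure. Equip $B$ with the Poincar\'e metric $g=4(1-|x|^{2})^{-2}\delta_{ij}$, so that the area element is $\mathrm{d}\mu_{H}=4(1-r^{2})^{-2}\dx$ and the intrinsic squared gradient is $|\nabla_{H}u|_{g}^{2}=\tfrac{(1-r^{2})^{2}}{4}|\nabla u|^{2}$. Conformal invariance of the Dirichlet integral in two dimensions yields $\int_{B}|\nabla u|^{2}\dx=\int_{B}|\nabla_{H}u|_{g}^{2}\,\mathrm{d}\mu_{H}$, while
\[
\int_{B}V(|x|)u^{2}\dx=\int_{B}\tilde{V}(r)\,u^{2}\,\mathrm{d}\mu_{H},\qquad \tilde{V}(r):=\tfrac{(1-r^{2})^{2}}{4}V(r),
\]
and $\int_{B}e^{4\pi u^{2}}\dx=\int_{B}e^{4\pi u^{2}}\tfrac{(1-r^{2})^{2}}{4}\,\mathrm{d}\mu_{H}$. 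The hypothesis $V\in\mathcal{V}$ is exactly the statement that $\tilde{V}$ is nonincreasing in $r$, and $(1-r^{2})^{2}/4$ manifestly is; since hyperbolic distance from the origin, $\rho=2\operatorname{arctanh}r$, is strictly increasing in $r$, both weights are radially nonincreasing on the hyperbolic disk.

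The ``$\ge$'' direction of \eqref{eq:radred} is trivial. For ``$\le$'' I replace $u$ by $|u|$ and, by density, assume $u\in\core(B)$; denote by $u^{\#}$ the radial decreasing rearrangement of $u$ with respect to $\mathrm{d}\mu_{H}$ (well defined, since $\operatorname{supp}u$ has finite $\mu_{H}$-measure). The hyperbolic P\'olya--Szeg\H{o} inequality, a standard consequence of the sharp isoperimetric inequality on $\mathbf{H}^{2}$, yields $\int|\nabla_{H}u^{\#}|_{g}^{2}\,\mathrm{d}\mu_{H}\le\int|\nabla_{H}u|_{g}^{2}\,\mathrm{d}\mu_{H}$, equivalently $\|\nabla u^{\#}\|_{2}\le\|\nabla u\|_{2}$. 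The Hardy--Littlewood inequality on $(\mathbf{H}^{2},\mathrm{d}\mu_{H})$ applied with the radially nonincreasing weight $\tilde{V}$ (its own rearrangement) against $u^{2}$ gives $\int \tilde{V}u^{2}\,\mathrm{d}\mu_{H}\le\int \tilde{V}(u^{\#})^{2}\,\mathrm{d}\mu_{H}$, equivalently $\int Vu^{2}\dx\le\int V(u^{\#})^{2}\dx$. Combined, these two inequalities show $Q_{V}(u^{\#})\le Q_{V}(u)$, so $u^{\#}$ is admissible. The same Hardy--Littlewood inequality applied to the pair $\bigl(e^{4\pi u^{2}},(1-r^{2})^{2}/4\bigr)$ delivers
\[
\int_{B}e^{4\pi u^{2}}\dx\;\le\;\int_{B}e^{4\pi(u^{\#})^{2}}\dx,
\]
and since $u^{\#}$ is radial the ``$\le$'' direction follows.

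The principal technical obstacle is the justification of hyperbolic P\'olya--Szeg\H{o} on the non-compact model $\mathbf{H}^{2}$; restricting first to $\core(B)$ is essential, since this keeps the supports of $u$ and of $u^{\#}$ hyperbolically bounded and their positive superlevel sets of finite $\mu_{H}$-measure, after which the standard coarea/isoperimetric proof carries over verbatim. The passage from $\core(B)$ to $H_{0}^{1}(B)$ is routine, relying on density together with lower semicontinuity of $Q_{V}$ (Fatou applied to $V\ge 0$) and Fatou on the exponential integrand.
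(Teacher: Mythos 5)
Your argument is essentially identical to the paper's proof: both pass to the Poincar\'e disk, use the conformal invariance of the Dirichlet integral, and apply the hyperbolic Hardy--Littlewood inequality (with the nonincreasing weights $(1-r^{2})^{2}V(r)$ and $(1-r^{2})^{2}$) together with the hyperbolic P\'olya--Szeg\H{o} inequality to show $Q_{V}(u^{\#})\le Q_{V}(u)$ and $\int e^{4\pi u^{2}}\dx\le\int e^{4\pi(u^{\#})^{2}}\dx$. Your added care about density in $\core(B)$ and the finiteness of the hyperbolic measure of the support is a reasonable technical supplement but does not change the route.
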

\begin{proof}
Consider $B$ as the Poincaré disk representing the hyperbolic plane
$\mathbb{H}^{2}$. The quadratic form of Laplace-Beltrami operator
on $\mathbb{H}^{2}$ in the Poincaré disk coordinates is $\int_{B}|\nabla u|^{2}\dx$.
Let $u^{\#}$ denote the spherical decreasing rearrangement of $u\in H_{0}^{1}(B)$
relative to the Riemannian measure of the Poincaré disk, $d\mu=\frac{4dx}{(1-r^{2})^{2}}$,
and recall that the Hardy-Littlewood and the Polia-Szegö inequalities
relative to these rearrangements remain valid ({[}5{]}). In particular,
by the Hardy-Littlewood inequality,

\[
\int_{B}V(|x|)u(x)^{2}\dx=\int_{B}\frac{1}{4}(1-|x|^{2})^{2}V(|x|)u(x)^{2}\mathrm{d}\mu
\]

\[
\le\int_{B}\frac{1}{4}(1-r^{2})^{2}V(r)u^{\#}(r){}^{2}\mathrm{d}\mu=\int_{B}V(r)u^{\#}(r)^{2}\mathrm{d}x,
\]
and thus, taking into account the Polia-Szegö inequality, we have
$Q_{V}(u)\ge Q_{V}(u^{\#})$. From this and the ``hyperbolic'' Hardy-Littlewood
inequality applied to $\int e^{4\pi u^{2}}\dx$ it follows that the
right hand side in (\ref{eq:radred}) is not less then the left hand
side, while the converse is trivial.\end{proof}
\begin{thm}
\label{thm:main}Let $N=2$, let $V\in\mathcal{V}$, and assume that,
for some $\alpha>0$, 

\begin{equation}
\lim_{r\to0}r^{2}(\log\frac{1}{2})^{2+\alpha}V(r)=0.\label{eq:Kato}
\end{equation}

Then the quantity 

\[
S_{V}=\sup_{u\in H_{0}^{1}(B),\; Q_{V}(u)\le1}J(u),\; J(u)=\int_{B}e^{4\pi u^{2}}\dx,
\]

is finite if and only if the quadratic form $Q_{V}$ is weakly coercive. \end{thm}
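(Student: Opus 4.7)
The plan is to reduce to a 1D problem on the half-line via Lemma~\ref{lem:radred} and the substitution $r = e^{-t}$, $v(t) := \sqrt{2\pi}\, u(e^{-t})$. A direct computation then gives
\[
Q_V(u) = \int_0^\infty \bigl[(v')^2 - \tilde V(t)\,v^2\bigr]\,dt, \qquad J(u) = 2\pi\int_0^\infty e^{2v(t)^2 - 2t}\,dt,
\]
where $\tilde V(t) := V(e^{-t})\,e^{-2t}$. The Kato hypothesis (\ref{eq:Kato}) becomes $\tilde V(t) = o(t^{-2-\alpha})$ at infinity, and weak coercivity passes unchanged to the 1D form $Q(v) := \int_0^\infty [(v')^2 - \tilde V v^2]\,dt$. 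It then suffices to prove the ``finite $\Leftrightarrow$ weakly coercive'' equivalence for this 1D form.

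For the easy direction (contrapositive of ``$S_V < \infty \Rightarrow $ weakly coercive''), if $Q_V$ is not weakly coercive, the ground state alternative of Murata gives a ground state $\varphi \not\equiv 0$ with approximants $u_k \in \core(B)$ such that $u_k \to \varphi$ in $H^1_{\mathrm{loc}}$ and $Q_V(u_k) \to 0$. The rescaled sequence $\tilde u_k := u_k/\sqrt{Q_V(u_k)}$ has $Q_V(\tilde u_k) = 1$ and $\tilde u_k(x)^2 \to +\infty$ pointwise a.e.\ on the open set $\{\varphi > 0\}$; Fatou's lemma then forces $J(\tilde u_k) \to \infty$, and hence $S_V = \infty$.

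For the hard direction, let $\varphi_0 > 0$ be the principal positive solution of $-\varphi_0'' = \tilde V \varphi_0$ on $(0,\infty)$ with $\varphi_0(0) = 0$. The Jacobi (ground state) transform $v = \varphi_0 w$ gives $Q(v) = \int_0^\infty \varphi_0^2 (w')^2\,dt$. Since $\varphi_0(t)\to\infty$, we have $w(\infty) = 0$, and Cauchy--Schwarz yields the pointwise bound
\[
v(t)^2 \le \varphi_0(t)^2 \Bigl(\int_t^\infty \varphi_0(s)^{-2}\,ds\Bigr)\,Q(v).
\]
Weak coercivity of $Q$ together with the $o(t^{-2-\alpha})$ decay from (\ref{eq:Kato}) should force non-oscillation of $\varphi_0$ and near-linear growth $\varphi_0(t) = t(1+o(1))$, yielding the uniform estimate $\varphi_0(t)^2 \int_t^\infty \varphi_0(s)^{-2}\,ds = t + O(1)$ on $(0,\infty)$. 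Under $Q(v) \le 1/(2\pi)$ this reduces to $v(t)^2 \le t + C$, preserving the critical Moser pointwise bound up to a bounded additive loss. A version of Moser's one-dimensional integration argument, applied to $v^2 - t$, then delivers the required uniform bound on $\int_0^\infty e^{2v^2 - 2t}\,dt$, and hence on $S_V$.

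The main obstacle is the sharp asymptotic $\varphi_0(t)^2 \int_t^\infty \varphi_0(s)^{-2}\,ds = t + O(1)$ uniformly on $(0,\infty)$. It requires simultaneously the global non-oscillation of $\varphi_0$ (equivalent to weak coercivity via 1D Wronskian/Sturm theory) and the quantitative rate at which $\varphi_0(t)/t \to 1$, controlled by (\ref{eq:Kato}); the latter is what keeps the additive error bounded rather than diverging. A naive reduction to a bound of the form $\|\nabla u\|_2^2 \le $ constant cannot succeed, since any such bound would enlarge the Moser--Trudinger exponent beyond the critical $4\pi$.
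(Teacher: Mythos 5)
Your overall architecture coincides with the paper's: reduction to radial functions via Lemma~\ref{lem:radred}, the ground-state alternative for the ``only if'' direction, and the ground-state (Jacobi) transform with a positive solution for the ``if'' direction; your substitution $r=e^{-t}$ is essentially the paper's construction read in one-dimensional Moser coordinates, with the positive solution normalized at the boundary rather than at the center. The necessity half is correct and matches the paper (you should first dispose of the case where $Q_{V}$ takes negative values, since the ground-state alternative presupposes $Q_{V}\ge0$; that case is trivial).

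The sufficiency half has a genuine gap at exactly the step you flag. First, the asymptotic $\varphi_{0}(t)^{2}\int_{t}^{\infty}\varphi_{0}(s)^{-2}\,ds=t+O(1)$ does not follow from (\ref{eq:Kato}). With $\tilde V(t)=e^{-2t}V(e^{-t})=o(t^{-2-\alpha})$, integrating $-\varphi_{0}''=\tilde V\varphi_{0}$ twice gives $\varphi_{0}(t)=ct+b+E(t)$ with $E(t)\asymp\int^{t}\int_{s}^{\infty}\sigma\tilde V(\sigma)\,d\sigma\,ds$; hence $E=O(1)$ --- and with it your $t+O(1)$ --- holds precisely when $\int^{\infty}t^{2}\tilde V(t)\,dt<\infty$, which (\ref{eq:Kato}) guarantees only for $\alpha>1$. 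For $\alpha\le1$ one only gets $t+o(t^{1-\alpha})$, an unbounded error (e.g.\ $\tilde V(t)=t^{-2-\alpha}/\log t$). (The same quantitative point is implicit in the paper's assertion that $s(r)=\gamma r+o(r)$ with $\gamma>0$.) Second, even granting $v(t)^{2}\le t+C$, the conclusion is not ``a version of Moser's integration argument'': the pointwise bound is necessary but far from sufficient ($v(t)^{2}=t$ already gives a divergent integral), and what must actually be proved is a Moser inequality for the weighted energy $\int\varphi_{0}^{2}(w')^{2}\,dt$ --- this is the nontrivial content you leave unsupplied. The paper closes this differently: it changes the radial variable to $s(r)=\exp\int_{1/e}^{r}\frac{dt}{t\varphi(t)^{2}}$ built from the positive solution bounded at the center, so that $Q_{V}(u)$ becomes a pure Dirichlet integral of $w=u/\varphi$ on the disk $B_{s(1)}$; weak coercivity enters exactly as the statement $s(1)<\infty$ (otherwise the logarithmic cut-offs of $\varphi$ exhibit it as a ground state), and the supremum is then dominated by the classical Trudinger--Moser supremum on $B_{s(1)}$. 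To salvage your route you should replace the pointwise bound by this change of variables ($d\sigma=\psi^{-2}\,dt$ for the solution $\psi$ bounded at $t=\infty$), which transplants the full constraint, not merely its Cauchy--Schwarz consequence, into the standard Moser problem.
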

\begin{proof}
\emph{1. Necessity}. Assume that $Q_{V}$ is not weakly coercive.
If $Q_{V}(w)<0$ for some $w\in H_{0}^{1}(B)$, then $J(kw)\to\infty$
and thus $S_{V}=+\infty$. Assume now that $Q_{V}\ge0$. Then by the
ground state alternative, $Q_{V}$ has a ground state $\varphi>0$
approximated by a $\core$-sequence $u_{k}\to\varphi$ in $H_{\mathrm{loc}}^{1}(B)$
such that $Q_{V}(u_{k})\to0$. Then, noting that there exist $\epsilon>0$
and $\delta>0$, such that for each $k$, inequality $u_{k}\ge\epsilon$
holds on some set of measure larger than $\delta$, we have $J(u_{k}/\sqrt{Q_{V}(u_{k})})\to\infty$,
which again yields $S_{V}=+\infty$. (Of course, $Q_{V}(u_{k})\neq0$
since otherwise $u_{k}$ equals $\varphi$ up to a constant multiple,
which is a contrasiction since $\varphi>0$ and $u_{k}\in\core(B)$.) 

\emph{2. Sufficiency.} Assume that $Q_{V}$ is weakly coercive. By
Lemma \ref{lem:radred} it suffices to consider the problem restricted
to radial decreasing functions. Since $Q_{V}$ is nonnegative, equation
$Q_{V}^{'}(u)=0$ has a positive radial $C^{1}-$solution $\varphi$.
The latter fact can be infered from the fact that V , by (\ref{eq:Kato}),
belongs to the local Kato class $\mathcal{K}_{2}$ (see \cite{AizSim}).
Let us normalize $\varphi$ by dividing it by $\varphi(0)$, so that
$\varphi(0)=1$ and $\varphi(r)\le1$. Define now now 

\begin{equation}
s(r)=e^{\int_{1/e}^{r}\frac{\mathrm{d}t}{t\varphi(t)^{2}}}\;,\;0<r<1,\label{eq:s(r)}
\end{equation}
so that the function $s(r)$ satisfies

\[
\frac{s'(r)}{s(r)}=\frac{1}{r\varphi(r)^{2}}\,.
\]
Since $\varphi(0)=1$, we have $s(r)=\gamma r+o_{r\to0}(r)$ with
some $\gamma>0$, which implies that $s(r)$ defines a monotone $C^{1}$-homeomorphism
between $[0,1)$ and $[0,s(1))$, where $s(1)=\lim_{r\to1}s(r)$ may
be, generally speaking, infinite. Let $w:[0,s(1))\to[0,1)$ be the
function

\begin{equation}
w(s(r))=u(r)/\varphi(r)\label{eq:w(s(r))}
\end{equation}
Then, writing $Q_{V}$ in the ground state transform form and changing
the radial integration variable from $r$ to $s(r)$ we get 

\[
Q_{V}(u)=\int_{B_{(s(1))}}|w'(|x|)^{2}\dx.
\]
Assume first that $s(1)<\infty$. Then, taking into account that $\varphi\le1$
and $r\le s(r)/s(1)$ (which is easy to infer from (\ref{eq:s(r)})),
we have 

\[
S_{V}\le\sup_{\int_{B_{s(1)}}|\nabla w|^{2}=1}\int_{B_{s(1)}}e^{4\pi\varphi(r(s))^{2}w(s)^{2}}s\mathrm{d}s\mathrm{d}\theta\le\sup_{\int_{B_{s(1)}}|\nabla w|^{2}=1}\int_{B_{s(1)}}e^{4\pi w^{2}}\mathrm{d}x<\infty,
\]
which proves the theorem in this case. Assume now that $s(1)=+\infty$.
Then $Q_{V}(u)=\int_{\R^{2}}|\nabla w|^{2}\dx$. Let $w_{k}(s)=1$
for $r<k$, $w_{k}(s)=\frac{log\frac{k^{2}}{s}}{k}$ for $k\le s<k^{2}$,
$w_{k}(s)=0$ for $s\ge k^{2}$. Then the sequence $\varphi(r)w_{k}(s(r))$
fulfills the definition of approximating sequence for the ground state
$\varphi$ of $Q_{V}$. This, however, in view of the ground state
alternative, contradicts the assumption that $Q_{V}$ is weakly coersive.
Thus $s(1)<\infty$ , in which case the theorem is already proved. \end{proof}
\begin{example}
(a) Adimurthi and Druet, \cite{AdiDruet}: the constant potential
$V(r)=\lambda<\lambda_{1}$; where $\lambda_{1}$ is the first eigenvalue
of the Dirichlet Laplacian, satisfies the assumptions of Theorem \ref{thm:main}.

(b) Potential $V_{\mathrm{Leray}}(r)=\frac{1}{4r^{2}(log\frac{1}{r})^{2}}$
gives $S_{V}=+\infty$, since $Q_{V_{\mathrm{Leray}}}$ has a ground
state $\varphi(r)=\sqrt{\log\frac{1}{r}}$.

(c) Another potential satisfying the assumptions of Theorem \ref{thm:main}
is 

\[
V_{\gamma}(r)=\frac{1}{4r^{2}(\log\frac{1}{r})^{2}\max\lbrace(\log\frac{1}{r})^{\gamma},1\rbrace},\gamma\in(0,\frac{4}{e^{2-1}}).
\]
Since $V_{\gamma}<V_{\mathrm{Leray}}$ with the strict inequality
on $(0,e^{-1})$, $Q_{V}$ is weakly coercive. The potential $V(r)=\frac{1}{(1-r^{2})^{2}}$,
for which inequality (\ref{eq:TMmod}) was proved in \cite{WangYe}$,$
is smaller than $V_{\gamma}(r)$, which (or comparison with the Hardy
inequality) implies that $V_{\gamma}(r)$ has the optimal multiplicative
constant and that the set $\lbrace Q_{V_{\gamma}}(u)\le1\rbrace$
is not bounded in $H_{0}^{1}(B)$. 
\end{example}

\section{The non-radial case and the $L^{p}$ - remainder. }

We start with an elementary extention of the result of the previous
section to the general bounded domain. We recall that $w^{\#}$ denotes
rearrangement with respect to the Riemannian measure on the hyperbolic
plane.
\begin{thm}
\label{thm:3.1.nonrad}Let $\Omega\subset\R^{2}$ be a bounded domain,
$R=\sqrt{\frac{|\Omega|}{\pi}}$, $V\in L^{1}\mathrm{_{loc}}(\Omega)$,
and let 

\[
\tilde{V}(r)=\frac{[(1-|x|^{2}/R^{2})^{2}V(\frac{x}{R})]^{\#}(r)}{(1-r^{2})^{2}}.
\]

\begin{thm}
Assume that $\tilde{V}\in\mathcal{V}$ and satisfies (\ref{eq:Kato}),
with some $\alpha>0$. If the form $Q_{\tilde{V}}:H_{0,\mathrm{rad}}^{1}(B)\to\R$,
defined as in (\ref{eq:QV}), is weakly coersive, then
\end{thm}
\[
S_{V}=\sup_{u\in\core(\Omega):Q_{V}(u)\le1}\int_{\Omega}e^{4\pi u^{2}}dx<\infty.
\]
\end{thm}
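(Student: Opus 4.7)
The plan is to reduce the inequality on the general bounded domain $\Omega$ to the radial inequality on the Poincar\'e disk $B$ provided by Theorem~\ref{thm:main}, by combining a volume-preserving rescaling with the hyperbolic decreasing rearrangement used in Lemma~\ref{lem:radred}.

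Given $u\in\core(\Omega)$, I first set $v(y)=u(Ry)$, so that $v$ is supported in $\Omega/R$, a set of Lebesgue measure $\pi=|B|$. Scale invariance of the Dirichlet integral in dimension two gives $\int|\nabla v|^{2}\,dy=\int|\nabla u|^{2}\,dx$, while changes of variables yield $\int_{\Omega}e^{4\pi u^{2}}\,dx=R^{2}\int e^{4\pi v^{2}}\,dy$ and $\int_{\Omega}Vu^{2}\,dx=R^{2}\int V(Ry)\,v(y)^{2}\,dy$. Modulo the harmless factor $R^{2}$, it therefore suffices to bound $\int e^{4\pi v^{2}}\,dy$ uniformly over the rescaled class.

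Next, since $|\Omega/R|=|B|$, the Poincar\'e-disk decreasing rearrangement $v^{\#}$ with respect to $d\mu=4\,dx/(1-|x|^{2})^{2}$ is a well-defined radial decreasing function on $B$, and exactly as in the proof of Lemma~\ref{lem:radred} the hyperbolic Polya--Szeg\"o and Hardy--Littlewood inequalities give
\[
\int|\nabla v^{\#}|^{2}\le\int|\nabla v|^{2},\qquad\int e^{4\pi v^{2}}\,dy\le\int_{B}e^{4\pi(v^{\#})^{2}}\,dx.
\]
For the potential term, the hyperbolic Hardy--Littlewood inequality applied to the pair $\bigl(\tfrac{1}{4}(1-|y|^{2}/R^{2})^{2}V(y/R),\,v(y)^{2}\bigr)$ against $d\mu$, together with the very definition of $\tilde V$, yields
\[
R^{2}\int V(Ry)\,v(y)^{2}\,dy\le\int_{B}\tilde V(|x|)\,(v^{\#}(x))^{2}\,dx,
\]
since by construction $(1-|x|^{2})^{2}\tilde V(|x|)$ is the hyperbolic decreasing rearrangement of the weighted potential, and since the hypothesis $\tilde V\in\mathcal V$ places this rearrangement in exactly the class to which the lemma applies. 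Assembling the three displays gives $Q_{\tilde V}(v^{\#})\le Q_{V}(u)\le 1$.

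The radial function $v^{\#}\in H^{1}_{0,\mathrm{rad}}(B)$ now satisfies $Q_{\tilde V}(v^{\#})\le 1$ for a potential $\tilde V\in\mathcal V$ obeying~(\ref{eq:Kato}) whose associated form is weakly coercive by hypothesis, so Theorem~\ref{thm:main} furnishes a uniform bound on $\int_{B}e^{4\pi(v^{\#})^{2}}\,dx$; undoing the rescaling of the first paragraph then yields $S_{V}<\infty$. The main technical obstacle is the implementation of the second step: specifying an equimeasurable transplantation from $\Omega/R$ onto $B$ (which need not contain $\Omega/R$) that is simultaneously compatible with the Dirichlet integral and with the weighted Hardy--Littlewood estimate for the potential; this is where the precise weight $(1-|y|^{2}/R^{2})^{2}$ in the definition of $\tilde V$ and the structural assumption $\tilde V\in\mathcal V$ do the essential work.
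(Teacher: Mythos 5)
Your proposal follows essentially the same route as the paper's own (two-sentence) proof: rescale to a domain of area $\pi$, reduce to the radial problem on $B$ via the hyperbolic rearrangement of Lemma~\ref{lem:radred}, and invoke Theorem~\ref{thm:main}; you in fact supply more of the intermediate inequalities than the paper does. The transplantation issue you flag at the end (that $\Omega/R$ need not sit inside the Poincar\'e disk, so the hyperbolic rearrangement is not literally the one of Lemma~\ref{lem:radred}) is a genuine point, but it is left equally unaddressed in the paper's sketch, so your argument is faithful to the intended proof.
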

\begin{proof}
Rescale the problem to a domain of the area $\pi$. Reduce the problem
to the radial problem on a unit disk by using rearrangements with
respect to the Riemannian measure of $\mathbb{H}^{2}$ and apply Theorem
\ref{thm:main}. 
\end{proof}
For the rest of the section we consider the maximization problem 

\[
S_{\lambda,p}=\sup_{u\in\core(\Omega):Q_{\lambda,p}(u)\le1}\int_{\Omega}e^{4\pi u^{2}}dx<\infty,
\]
where 

\[
Q_{\lambda,p}(u)=\int_{\Omega}|\nabla u|^{2}\dx-\lambda\|u\|_{p}^{2},
\]
and $\Omega\subset\R^{2}$. We will use the following constant: 

\[
\lambda_{p}=\inf_{u\in C_{0}^{1}(\Omega^{\star}):\,\|u\|_{p}=1}\int_{\Omega^{\star}}|\nabla u|^{2}\dx,\; p>0,
\]
where $\Omega^{\star}$ is the open ball of radius $\sqrt{\frac{|\Omega|}{\pi}}$
. 
\begin{thm}
\label{thm:p}Let $2<p<\infty$ and $\lambda<\lambda_{p}$. Then 

\[
S_{\lambda,p}=\sup_{u\in\core(\Omega):\, Q_{\lambda,p}(u)\le1}\int_{\Omega}e^{4\pi u^{2}}\dx<\infty.
\]
\end{thm}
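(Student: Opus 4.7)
The plan is a concentration-compactness argument: Schwarz symmetrization on $\Omega^\star$ preserves $\|u\|_p$ and $\int e^{4\pi u^2}\dx$ and does not increase $\|\nabla u\|_2$, so $Q_{\lambda,p}(u^\star)\le Q_{\lambda,p}(u)$ and it suffices to bound $S_{\lambda,p}$ on radial decreasing functions on $\Omega^\star=B_R$. The definition of $\lambda_p$ then gives $\|u\|_p^2\le\|\nabla u\|_2^2/\lambda_p$, hence $Q_{\lambda,p}(u)\ge(1-\lambda/\lambda_p)\|\nabla u\|_2^2$, which yields the a priori bound $\|\nabla u\|_2^2\le M:=\lambda_p/(\lambda_p-\lambda)$ on the admissible class.

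Suppose $S_{\lambda,p}=+\infty$ and pick radial decreasing $u_n$ with $Q_{\lambda,p}(u_n)=1$ (by homogeneity) and $\int e^{4\pi u_n^2}\dx\to\infty$. Then $u_n$ is bounded in $H_0^1(B_R)$ and, up to subsequence, $u_n\rightharpoonup u$ weakly in $H_0^1$ and $u_n\to u$ strongly in $L^p$, so $\alpha_n:=\|\nabla u_n\|_2^2=1+\lambda\|u_n\|_p^2\to\alpha:=1+\lambda\|u\|_p^2$.

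If $u\ne 0$, set $\tilde u_n:=u_n/\sqrt{\alpha_n}$, so $\|\nabla\tilde u_n\|_2=1$ and $\tilde u_n\rightharpoonup u/\sqrt\alpha$. The Lions improvement of Trudinger--Moser gives a uniform bound on $\int e^{4\pi q\tilde u_n^2}\dx$ for any $q<\alpha/(\alpha-\|\nabla u\|_2^2)$. The choice $q=\alpha_n$ is admissible in the limit if $\alpha<\alpha/(\alpha-\|\nabla u\|_2^2)$, i.e., $\|\nabla u\|_2^2>\alpha-1=\lambda\|u\|_p^2$, which is exactly the Sobolev--Poincar\'e inequality $\|\nabla u\|_2^2\ge\lambda_p\|u\|_p^2>\lambda\|u\|_p^2$. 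Thus $\int e^{4\pi u_n^2}\dx=\int e^{4\pi\alpha_n\tilde u_n^2}\dx$ is uniformly bounded, contradicting the assumption.

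If $u\equiv 0$ then $\|u_n\|_p\to 0$, $\alpha_n\to 1$, and radial monotonicity forces concentration at $r=0$. Rescaling at the concentration length $r_n\to 0$ and comparing with the Moser profile gives $\|u_n\|_p^2\sim C/\log(1/r_n)$, which exactly matches the defect $\alpha_n-1=\lambda\|u_n\|_p^2$. An Adimurthi--Druet-style quantitative computation then shows the core contribution $\int_{B_{r_n}}e^{4\pi u_n^2}\dx\sim r_n^{2-2\alpha_n}\to e^{2\lambda C}$ is uniformly bounded, and the outer annulus is controlled by the uniform $H_0^1$-bound. The concentration case is the technical heart of the argument: one must track the order of the defect $\alpha_n-1$ against the logarithmic concentration factor, and the strict inequality $\lambda<\lambda_p$ is exactly what ensures the critical cancellation.
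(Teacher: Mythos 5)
Your reduction to radial decreasing functions and your treatment of the non-vanishing case are sound: the a priori bound $\|\nabla u\|_2^2\le\lambda_p/(\lambda_p-\lambda)$, the compact embedding $H_0^1\hookrightarrow L^p$, and the application of Lions' improved Trudinger--Moser inequality with the verification $\alpha-\|\nabla u\|_2^2=1-(\|\nabla u\|_2^2-\lambda\|u\|_p^2)<1$ for $u\neq0$ all check out. The genuine gap is the concentration case $u\equiv0$. There you must prove, in effect, an Adimurthi--Druet-type inequality with the $L^p$-norm in the exponent: $\sup\int e^{4\pi(1+\lambda\|u_n\|_p^2)\tilde u_n^2}<\infty$ along normalized radial sequences $\tilde u_n\rightharpoonup0$. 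Your argument for this consists of asymptotics ($\|u_n\|_p^2\sim C/\log(1/r_n)$, $\int_{B_{r_n}}e^{4\pi u_n^2}\sim r_n^{2-2\alpha_n}$) that are correct for the model Moser sequence but are not established for an arbitrary concentrating sequence: "concentration length" is not defined, the relation between $\|u_n\|_p^2$ and $\log(1/r_n)$ is exactly what a blow-up analysis must deliver, and if $\|u_n\|_p^2\log(1/r_n)\to\infty$ your own heuristic bound $e^{2\lambda\|u_n\|_p^2\log(1/r_n)}$ diverges. Note also that this case cannot be reduced to the known $p=2$ result, since on a bounded domain $\|u\|_p^2$ dominates $\|u\|_2^2$ for $p>2$ on vanishing sequences (Gagliardo--Nirenberg gives $\|u_n\|_p^2\lesssim\|u_n\|_2^{4/p}\gg\|u_n\|_2^2$). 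So the "technical heart" you point to is not a computation to be filled in routinely; it is the entire difficulty of the theorem, and your remark that $\lambda<\lambda_p$ "ensures the critical cancellation" is misplaced --- in the vanishing case $\|u_n\|_p\to0$ and the size of $\lambda$ is irrelevant to your heuristic; $\lambda<\lambda_p$ is only used in the non-vanishing case and for the a priori bound.

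For contrast, the paper avoids blow-up analysis altogether. It writes $\lambda\|u\|_p^2=\int_B V_u u^2$ with the $u$-dependent potential $V_u=\lambda u^{p-2}/\|u\|_p^{p-2}$, takes a positive radial solution $\varphi_u$ of $-\Delta\varphi=V_u\varphi$ normalized by $\varphi_u(0)=1$, and applies the ground state transform together with the change of variable $s_u(r)=\exp\bigl(\int_{e^{-1}}^r\frac{dt}{t\varphi_u(t)^2}\bigr)$, so that $Q_{\lambda,p}(u)$ becomes a pure Dirichlet integral on a disk of radius $s_u(1)$. The role of $\lambda<\lambda_p$ is then to show, via H\"older and a compactness argument, that $\varphi_u$ is uniformly bounded below near $r=1$, hence $s_u(1)\le S$ uniformly, and the claim reduces to the classical Moser inequality on $B_S$. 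If you want to salvage your route, you would need to carry out the full blow-up analysis in the spirit of Adimurthi--Druet for the $L^p$ perturbation; otherwise the change-of-variables argument is the shorter path.
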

\begin{proof}
It suffices to verify the assertion in restriction to positive radial
decreasing $H_{0}^{1}$-functions on $\Omega^{\star}$ when $\Omega^{\star}$
is the unit disk $B$. Let us represent $Q_{\lambda,p}(u)$ as $Q_{V_{u}}(u)$
with $V_{u}(u)=\lambda\frac{u^{p-2}}{\|u\|_{p}^{p-2}}$ , $u\in H_{0,\mathrm{rad}}^{1}$.
Observe that by Hölder inequality 
\[
\int_{B}u^{p-2}\varphi^{2}\dx\le\|u\|_{p}^{p-2}\|\varphi\|_{p}^{2},
\]
 and therefore $Q_{V_{u}}(\varphi)\ge Q_{\lambda,p}(\varphi)\ge0$.
Consequently, there exists a positive radial solution $\varphi_{u}$
to the linear equation$-\Delta\varphi=V_{u}\varphi$ in $B$. Since,
by the standard radial estimate, $V_{u}(r)\le C(\log\frac{1}{r})^{\frac{p-2}{p}}$,
one has $\varphi_{u}\in C^{1}(B)$, and the maximum of $\varphi_{u}$
is at the origin. We assume without loss of generality that $\varphi_{u}(0)=1$.
By the ground state transform we have for any $v\in\core(B)$,

\[
Q_{V_{u}}(v)=\int_{B}\varphi_{u}^{2}|\nabla\frac{v}{\varphi_{u}}|^{2}\dx,\, v\in\core(B).
\]
Let now 

\[
s_{u}(r)=e^{\int_{e^{-1}}^{r}\frac{dt}{t\varphi_{u}(t)^{2}}},\;0<r<1,
\]
and note that this function satisfies 

\[
\frac{s'_{u}(r)}{s_{u}(r)}=\frac{1}{r\varphi_{u}(r)^{2}}.
\]
Observe that since $\varphi_{u}(0)=1$ and $\varphi_{u}$ is a classical
solution, we have $s_{u}(r)=\gamma r+o_{r\to0}(r)$ with some $\gamma>0$,
and thus the mapping $r\mapsto s_{u}(r)$ is a monotone $C^{1}$-homeomorphism
between {[}0, 1) and $[0,s_{u}(1))$. We will show now that $\varphi_{u}$
is bounded away from zero near $r=1$, uniformly in a $H_{0,\mathrm{rad}}^{1}(B)$-ball
of $u$. First note that if for some $u\in H_{0,\mathrm{rad}}^{1}(B)$
one has $\varphi_{u}(1)=0$, then $\varphi_{u}$ is the first eigenfunction
for the Dirichlet eigenvalue problem $-\Delta\varphi=V_{u}\varphi$
in $B$. From the Hölder inequality and the definition of $\lambda_{p}$$ $
we get: 

\[
\int_{B}|\nabla\varphi_{u}|^{2}\dx=\int_{B}V_{u}\varphi^{2}\dx\le\lambda\left(\int_{B}\left(\frac{u}{\|u\|_{p}}\right)^{p}\right)^{1-2/p}\left(\int_{B}\varphi_{u}^{p}\right)^{2/p}
\]

\[
\le\lambda\lambda_{p}^{-1}\int_{B}|\nabla\varphi_{u}|^{2}\dx<\int_{B}|\nabla\varphi_{u}|^{2}\dx,
\]
a contradiction. Thus $\varphi_{u}(1)>0$ for any $u$, and it remains
to show that $\varphi_{u}(r)$ has a common positive lower bound for
all $u$ and all $r$ near $1$. Indeed, assume that there is a sequence
$u_{k}$ with $Q_{\lambda,p}(u_{k})\le1$, and a sequence $r_{k}\to1$
such that $\varphi_{u_{k}}(r_{k})\to0$ and $-\Delta\varphi_{u_{k}}=\lambda u_{k}^{p-2}\varphi_{u_{k}}$.
Note that since $\lambda<\lambda_{p}$, the sequence $u_{k}$ is bounded
in $H_{0}^{1}(B)$, and without loss of generality we may assume that
$u_{k}\rightharpoonup u$ in $H_{0}^{1}(B)$ with $Q_{\lambda,p}(u)\le1$.
From here one can easily derive that $\varphi_{u_{k}}$ converges
uniformly to some nonnegative $\varphi$ with $\varphi(1)=0$, and
that $\varphi$ satisfies the equation $-\Delta\varphi=V_{u}\varphi$.
In other words, $\varphi=\varphi_{u}$ and we have $\varphi_{u}(1)=0$,
which is a contradiction. We conclude that there exists $\epsilon>0$
and $\delta>0$, such that $\inf_{r\in[1-\epsilon,1],\, u:Q_{\lambda,p}(u)\le1}\varphi_{u}(r)\ge\delta$.
This implies that there is a number S such that $s_{u}(1)\le S$ for
all $u$ satisfying $Q_{\lambda,p}(u)\le1$. 

For each $v\in H_{0,\mathrm{rad}}^{1}(B)$ define the following function
on $[0,s_{u}(1))$: 

\[
w_{v;u}(s_{u}(r))=v(r).
\]
Then, applying the ground state transform and the changing the radial
integration variable from $r$ to $s_{u}$, we have 

\[
Q_{V_{u}}(v)=\int_{B}\varphi_{u}^{2}|\nabla\frac{v}{\varphi_{u}}|^{2}\dx=\int_{B_{s_{u}(1)}}|w'_{v;u}(|x|)|^{2}\dx,\; v\in H_{0,\mathrm{rad}}^{1}(B).
\]

By setting $v=u$, we get from here 

\[
Q_{\lambda,p}(u)=\int_{B_{s_{u}(1)}}|w'_{u;u}(|x|)|^{2}\dx,\; v\in H_{0,\mathrm{rad}}^{1}(B).
\]

Then, taking into account that $\varphi_{u}\le1$ for every $u$,
we arrive at 

\[
S_{\lambda,p}\le S^{2}\sup_{\int_{B}|\nabla w|^{2}=1}\int_{B}e^{4\pi w(|x|)^{2}}\dx<\infty.
\]

which proves the theorem. 
\end{proof}

\section{Related inequalities }

The arguments in Sections 2 and 3 allow to give the following refinement
of the Onofri-Beckner inequality (\cite{Onofri,Beckner}). The original
inequality for the unit disk is

\begin{equation}
\log\left(\frac{1}{\pi}\int_{B}e^{u}\dx\right)+\left(\frac{1}{\pi}\int_{B}e^{u}\dx\right)^{-1}\le1+\frac{1}{16\pi}\int_{B}|\nabla u|^{2}\dx,\; u\in\core(B).\label{eq:flatOnofri}
\end{equation}

\begin{thm}
Let $\Omega=B$ and assume that $\psi(u)=\int_{B}Vu^{2}\dx$ with
$V$ as in Theorems \ref{thm:main} and \ref{thm:3.1.nonrad}, or
that $\psi(u)=\lambda\|u\|_{p}^{2}$, $\lambda<\lambda_{p}$, $p>2$,
as in Theorem \ref{thm:p}. Then for every $\; u\in\core(B)$,
\end{thm}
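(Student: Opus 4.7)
The plan is to transplant the two-step strategy of Theorems~\ref{thm:main} and~\ref{thm:p}---radial reduction, followed by ground state transform and the radial change of variable $r\mapsto s(r)$---to the Onofri--Beckner setting, applying the scale-invariant version of (\ref{eq:flatOnofri}) on $B_{s(1)}$ in place of the classical Trudinger--Moser inequality at the end. First I would reduce to radial $u$ via an Onofri--Beckner analogue of Lemma~\ref{lem:radred}: equimeasurable rearrangement preserves $\int_B e^u\dx$ (since $t\mapsto e^t$ is monotone), while by Polya--Szeg\"o and the Hardy--Littlewood step used there it decreases $\|\nabla u\|_2^2$ without increasing $\psi(u)$ in the potential case, and Euclidean Schwarz symmetrization plays the same role in the $L^p$ case. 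Then, exactly as in Theorems~\ref{thm:main} and~\ref{thm:p}, I would introduce the (moving) ground state $\varphi$ normalized by $\varphi(0)=1$, the monotone diffeomorphism $s(r)$ defined by (\ref{eq:s(r)}), and the transformed function $w(s(r))=u(r)/\varphi(r)$. The weak coercivity hypothesis yields $s(1)\le S$ for a constant $S$ depending only on the admissible class, and the ground state transform combined with the radial change of variable gives $\|\nabla u\|_2^2-\psi(u)=\int_{B_{s(1)}}|\nabla w|^2\dx$.

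Applied to $w\in H_0^1(B_{s(1)})$, the scale-invariant form of (\ref{eq:flatOnofri}) then reads
\[
\Phi\!\Bigl(\tfrac{1}{\pi s(1)^2}\int_{B_{s(1)}} e^w\,\dx\Bigr)\;\le\;1+\tfrac{1}{16\pi}\bigl[\|\nabla u\|_2^2-\psi(u)\bigr],\qquad \Phi(t):=\log t+\tfrac{1}{t}.
\]
Tracking the polar Jacobian $r\,dr=\tfrac{r^2\varphi(r)^2}{s}\,ds$ together with $\varphi\le 1$ and $r\le s/s(1)$ yields the pointwise bound $\tfrac{1}{\pi}\int_B e^u\,\dx\le\tfrac{1}{\pi s(1)^2}\int_{B_{s(1)}} e^{\varphi w}\,\dx$, so the claim reduces to comparing the two values of $\Phi$ at these averages.

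The main obstacle is precisely this final comparison, since $\Phi$ is not monotone on $(0,\infty)$ but has a strict minimum $\Phi(1)=1$: a pointwise inequality between the two averages does not translate directly into an inequality between the corresponding values of $\Phi$. I would address this either by using a constant-invariant (``displacement'') formulation of (\ref{eq:flatOnofri}) that normalizes the argument of $\Phi$ so that both quantities lie on the monotone branch $[1,\infty)$, or by a direct case analysis according to which side of $1$ each average lies on---using Jensen's inequality applied to $e^{\varphi w}$ with $\varphi\in[0,1]$, namely $e^{\varphi w}\le 1+\varphi(e^w-1)$, to reverse the Jacobian-based inequality on the branch $\le 1$, and splitting at $t=1$ to handle the straddling case. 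All remaining details are routine transcriptions of the arguments already carried out in Sections~2 and~3.
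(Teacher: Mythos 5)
Your skeleton matches the paper's: radial reduction, ground state transform with the change of variable $r\mapsto s(r)$ and $w(s(r))=u(r)/\varphi(r)$, uniform control of $s(1)$ from weak coercivity, and an application of the flat Onofri--Beckner inequality (\ref{eq:flatOnofri}) to $w$ on $B_{s(1)}$. You also correctly isolate the real difficulty, namely that $F(t)=\log t+1/t$ is decreasing on $(0,1)$ and increasing on $(1,\infty)$, so the integral inequality $\frac{1}{\pi}\int_B e^u\dx\le\frac{1}{\pi s(1)^2}\int_{B_{s(1)}}e^{\varphi w}\dx$ does not by itself compare the values of $F$.

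However, your proposed resolution of that difficulty is a genuine gap. The paper does not split cases according to which side of $1$ the \emph{averages} fall on; it splits the \emph{function}, writing $u=u^{+}-u^{-}$, proves the inequality separately for $u\ge0$ and for $u\le0$, and then glues the two cases using the superadditivity $Q(u)\ge Q(u^{+})+Q(u^{-})$ together with a subadditivity property of $F$ (applied via $\frac{1}{\pi}\int_B e^u\dx=\frac{1}{\pi}\int_B e^{u^{+}}\dx+\frac{1}{\pi}\int_B e^{-u^{-}}\dx-1$). The sign split is what makes the monotone branches of $F$ line up with the direction of the integral inequalities: for $u\ge0$ both averages are $\ge1$ and one uses $\varphi w\le w$; for $u\le0$ both are $\le1$, $F$ is decreasing, and the chain of inequalities must be run in the \emph{opposite} direction, using $u=\varphi w\ge w$ and the reverse Jacobian bound. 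Your ``splitting at $t=1$'' on the averages cannot replace this: in the straddling regime $A_1:=\frac{1}{\pi}\int_B e^u\dx<1<A_3:=\frac{1}{\pi s(1)^2}\int_{B_{s(1)}}e^{w}\dx$ one has $F(A_1)\to\infty$ as $A_1\to0$ while $F(A_3)$ stays bounded, so knowing $A_1\le A_3$ and the Onofri bound on $F(A_3)$ gives nothing; the needed control must come from $Q(u^{-})$, which only the sign decomposition extracts. Likewise the Jensen bound $e^{\varphi w}\le1+\varphi(e^w-1)$ relates $A_2$ to an average of $e^w$ against the weight $\varphi$, not to $A_3$, and does not close this case. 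Since the $u^{+}/u^{-}$ decomposition and the subadditivity of $F$ are new ingredients not present in Sections 2--3, they are not covered by your closing remark that the remaining details are routine transcriptions.
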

\begin{equation}
\log\left(\frac{1}{\pi}\int_{B}e^{u}\dx\right)+\left(\frac{1}{\pi}\int_{B}e^{u}\dx\right)^{-1}\le1+\frac{1}{16\pi}\left(\int_{B}|\nabla u|^{2}\dx-\psi(u)\right).\label{eq:NewOnofri}
\end{equation}

\begin{proof}
We give the proof for the case of the remainder term $\psi$ as in
Theorem \ref{thm:main}. The proofs in other cases are analogous.
By the standard rearrangement argument it suffices to consider the
radially symmetric functions. 

Assume firtst that $u\ge0$. Without loss of generality we may assume
that $u$ is radial. Let us use the coordinate transformation (\ref{eq:s(r)})
and the substitution (\ref{eq:w(s(r))}). Taking into account that
the function $F(t):=\log t+t^{-1}$ is increasing on $(1,\infty)$,
that the function $\varphi$, involved in the transformation, does
not exceed $1$, and that, as it is immediate from (\ref{eq:s(r)}),
$s(r)/s(1)\ge r$ we have from (\ref{eq:flatOnofri}) 

\[
F\left(\frac{1}{\pi s(1)^{2}}\int_{B_{s(1)}}e^{\varphi(r(s))w(s)}\frac{r(s)^{2}\varphi(r(s))^{2}}{s^{2}}\dx(s)\right)\le
\]

\[
\le1+\frac{1}{16\pi}\int_{B_{s(1)}}|\nabla w|^{2}\dx,\; w\in H_{0,rad}^{1}(B_{s(1)}).
\]
Using (\ref{eq:w(s(r))}) in order to return to the original variable
$u$, we immediately have (\ref{eq:NewOnofri}) for $u\ge0$. 

Consider now the case $u\le0$. Without loss of generality we again
assume that $u$ is radial. Then, taking into account (\ref{eq:s(r)}),
(\ref{eq:w(s(r))}), $\varphi\le1$, $r\le s(r)$, and the fact that
the function $F$ is decreasing on $(0,1)$, we have

\[
F\left(\frac{1}{\pi}\int_{B}e^{u}\dx\right)\le F\left(\frac{1}{\pi}\int_{B}e^{w(s(r))}\dx\right)
\]

\[
=F\left(\frac{1}{\pi s(1)^{2}}\int_{B_{s(1)}}e^{w(s)}\frac{s^{2}}{r(s)^{2}\varphi(r(s))^{2}}\dx(s)\right)
\]

\[
\le F\left(\frac{1}{\pi s(1)^{2}}\int_{B_{s(1)}}e^{w(s)}\dx(s)\right)
\]

\[
\le1+\frac{1}{16\pi}\int_{B_{s(1)}}|\nabla w|^{2}\dx=1+\frac{1}{16\pi}Q_{V}(u).
\]

Finally, we write a general $u$ as $u=u^{+}+(-u^{-})$ and note that
the function $\log t+1/t$ is subadditive on $(0,\infty)$.We leave
it to the reader to prove the subadditivity with help of the following
sketch: collect the logarithmic terms in the subadditivity inequality
into a single logarithm, invert the logarithm, and replace the resulting
exponential function by its Taylor polynomial up to the order 2. Inequality
(\ref{eq:NewOnofri}) is then immediate from the cases where $u\ge0$
and $u\le0$. \end{proof}
\begin{cor}
\textbf{(Inequality of Adimurthi-Druet type.)} Let $Q(u)=\|\nabla u\|_{2}^{2}-\psi(u)$
be any of the functionals $Q_{V}$ as in Theorems \ref{thm:main}
and \ref{thm:3.1.nonrad}, or the functional $Q_{p}$, as in Theorem
\ref{thm:p}. Then 

\[
\sup_{\|\nabla u\|_{2}\le1}\int_{\Omega}e^{4\pi(1+\psi(u))u^{2}}\dx\le\sup_{\|\nabla u\|_{2}\le1}\int_{\Omega}e^{\frac{4\pi u^{2}}{1-\psi(u)}}\dx<\infty
\]
\end{cor}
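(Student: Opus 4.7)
The plan is two-step: first I would establish the inner inequality by a pointwise comparison of the integrands, and then I would prove finiteness of the right-hand supremum by rescaling each competitor $u$ into the sublevel set $\{Q\le 1\}$ already controlled by Theorems \ref{thm:main}, \ref{thm:3.1.nonrad} and \ref{thm:p}.

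For any admissible $u\in\core(\Omega)$ with $\|\nabla u\|_{2}\le 1$, weak coercivity of $Q_{V}$ (in the first two cases), or the inequality $Q_{\lambda,p}(u)\ge(1-\lambda/\lambda_{p})\|\nabla u\|_{2}^{2}$ obtained by rearrangement in the third, forces $Q(u)\ge 0$, so $\psi(u)\le\|\nabla u\|_{2}^{2}\le 1$. I would moreover show that $\psi(u)<1$ strictly for nonzero $u$: equality would force $Q(u)=0$, which, via the equivalent stronger form of weak coercivity recalled in Section 2 (the bound $Q(u)\ge\int_{\Omega}W(x)(|\nabla u|^{2}+u^{2})\dx$ with a continuous $W>0$), would entail $u\equiv 0$; in the $L^{p}$ case the same conclusion is even more direct. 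With $\psi(u)\in[0,1)$ in hand, the first inequality reduces to the elementary identity $(1+\psi(u))(1-\psi(u))=1-\psi(u)^{2}\le 1$, which gives $4\pi(1+\psi(u))u^{2}\le 4\pi u^{2}/(1-\psi(u))$ and hence $\le$ between the integrands, integrals, and suprema.

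For the main finiteness bound I would exploit the $2$-homogeneity $\psi(\alpha u)=\alpha^{2}\psi(u)$, shared by both $\psi(u)=\int_{\Omega}Vu^{2}\dx$ and $\psi(u)=\lambda\|u\|_{p}^{2}$. Given nonzero $u\in\core(\Omega)$ with $\|\nabla u\|_{2}\le 1$, set $v:=u/\sqrt{1-\psi(u)}\in\core(\Omega)$. A direct computation using the homogeneity yields
\[
Q(v)=\frac{\|\nabla u\|_{2}^{2}-\psi(u)}{1-\psi(u)}\le\frac{1-\psi(u)}{1-\psi(u)}=1,
\]
so $v$ sits in the sublevel set to which the appropriate preceding theorem applies, delivering $\int_{\Omega}e^{4\pi v^{2}}\dx\le S_{V}<\infty$. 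Since $4\pi v^{2}=4\pi u^{2}/(1-\psi(u))$, this is precisely the uniform bound on the right-hand supremum that the corollary requires.

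The whole argument is essentially algebraic once the Trudinger-Moser-with-remainder estimates of Sections 2 and 3 are granted; the main (small) obstacle to anticipate is the strict inequality $\psi(u)<1$, for which one genuinely needs the stronger equivalent form of weak coercivity rather than bare nonnegativity of $Q$, since otherwise the rescaling defining $v$ could fail to be well-posed and the right-hand side of the corollary could be formally infinite.
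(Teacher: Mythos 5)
Your proposal is correct and is essentially the paper's own argument: the first inequality comes from $(1+\psi)(1-\psi)\le 1$, and the finiteness from the $2$-homogeneity rescaling $v=u/\sqrt{1-\psi(u)}$ (the paper writes the same substitution as $u=\sqrt{\gamma}v$ with $\gamma\le 1/(1-\psi(v))$) combined with the bounds of Theorems \ref{thm:main}, \ref{thm:3.1.nonrad} and \ref{thm:p}. Your extra verification that $\psi(u)<1$ for nonzero $u$ with $\|\nabla u\|_{2}\le 1$ is a welcome detail the paper leaves implicit, but it does not change the route.
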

\begin{proof}
Note first that the integral in the left hand side is smaller than
the integral in the right hand side by the inequality $(1+\psi)(1-\psi)<1$.
Let $u=\sqrt{\gamma}v$ with $\|\nabla v\|_{2}=1$. Then $Q(u)\le1$
is equivalent to $\gamma-\gamma\psi(v)\le1$, i.e. $\gamma\le\frac{1}{1-\psi(v)}$.
Write (\ref{eq:TMmod}), substitue $u^{2}=\gamma v^{2}$ into the
integral and rename $v$ as $u$. \end{proof}
\begin{cor}
Let $\|\cdot\|_{\mathrm{Orl}}$ denote the Orlicz norm associated
with the Trudinger-Moser functional on a bounded domain $\Omega\subset\R^{2}$,
and let $Q(u)=\|\nabla u\|_{2}^{2}-\psi(u)$ be any of the functionals
$Q_{V}$ as in Theorems \ref{thm:main} and \ref{thm:3.1.nonrad},
or the functional $Q_{p}$, as in Theorem \ref{thm:p} Then there
exists a $C>0$ such that 

\[
\int_{\Omega}|\nabla u|^{2}\dx-\psi(u)\ge C\|u\|_{\mathrm{Orl}}^{2}
\]

\end{cor}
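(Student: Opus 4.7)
The plan is to extract the Orlicz-norm bound directly from the Trudinger-Moser inequality proved in Theorems \ref{thm:main}, \ref{thm:3.1.nonrad}, and \ref{thm:p}. Recall that the Orlicz (Luxemburg) norm associated with $\Phi(s)=e^{4\pi s^{2}}-1$ is
\[
\|u\|_{\mathrm{Orl}}=\inf\Bigl\{k>0\ :\ \int_{\Omega}\bigl(e^{4\pi(u/k)^{2}}-1\bigr)\dx\le 1\Bigr\},
\]
so to prove $Q(u)\ge C\|u\|_{\mathrm{Orl}}^{2}$ it suffices to exhibit a constant $M\ge 1$ such that the choice $k^{2}=M\,Q(u)$ satisfies the defining inequality of the Luxemburg norm.

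First I would normalize: for $u\not\equiv 0$ in $\core(\Omega)$ set $v=u/\sqrt{Q(u)}$, so that $Q(v)=1$. The theorem then yields $S:=\sup_{Q(w)\le 1}\int_{\Omega}e^{4\pi w^{2}}\dx<\infty$, hence
\[
\int_{\Omega}\bigl(e^{4\pi v^{2}}-1\bigr)\dx\le S-|\Omega|=:A.
\]
If $A\le 1$ we are done with $M=1$; otherwise the task is to scale $v$ down by a factor $\sqrt{\alpha}$ with $\alpha\ge 1$ large enough to make the integral drop below $1$. The key pointwise estimate is the elementary convexity inequality
\[
e^{y/\alpha}-1\;\le\;\tfrac{1}{\alpha}(e^{y}-1)+\bigl(1-\tfrac{1}{\alpha}\bigr)(e^{0}-1)\;=\;\tfrac{1}{\alpha}(e^{y}-1),\qquad y\ge 0,\ \alpha\ge 1,
\]
which follows from convexity of $t\mapsto e^{t}-1$ together with $e^{0}-1=0$. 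Applying this with $y=4\pi v^{2}$ gives
\[
\int_{\Omega}\bigl(e^{4\pi v^{2}/\alpha}-1\bigr)\dx\le\tfrac{1}{\alpha}\int_{\Omega}\bigl(e^{4\pi v^{2}}-1\bigr)\dx\le\tfrac{A}{\alpha}.
\]

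Choosing $\alpha=\max(1,A)$ renders the right-hand side $\le 1$, whence by definition $\|v/\sqrt{\alpha}\|_{\mathrm{Orl}}\le 1$, i.e.\ $\|v\|_{\mathrm{Orl}}^{2}\le\alpha$. Multiplying through by $Q(u)$ and using $v=u/\sqrt{Q(u)}$ yields
\[
\|u\|_{\mathrm{Orl}}^{2}\le\alpha\,Q(u),
\]
so the claim holds with $C=1/\alpha=1/\max(1,S-|\Omega|)$. There is no serious obstacle here; the only point worth checking is that the test class $\core(\Omega)$ used in the theorem is the same class on which the corollary is stated, which it is, and the extension to all $u$ for which both sides make sense proceeds by standard density (the Orlicz norm is lower semicontinuous with respect to $L^{1}$-convergence and $Q$ is continuous on its natural form domain).
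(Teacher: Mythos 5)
Your argument is correct and follows essentially the same route as the paper: normalize so that $Q(u)=1$ (using the $2$-homogeneity of $Q$), deduce from the Trudinger--Moser bound a uniform bound on the Luxemburg norm, and rescale. The only difference is that you spell out, via the convexity estimate $e^{y/\alpha}-1\le\frac{1}{\alpha}(e^{y}-1)$, the step that the paper leaves implicit as ``a uniform bound on $\int_{\Omega}(e^{4\pi u^{2}}-1)\,\mathrm{d}x$ gives a uniform bound on the Orlicz norm.''
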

Proof. Assume first that $Q(u)=1$. From the uniform bound on $\int_{\Omega}(e^{4\pi u^{2}}-1)\dx$
in (\ref{eq:TMmod}) follows a uniform bound for the Orlicz norm,
which yileds the inequality under the constraint $Q(u)=1$. It remains
to use the standard homogeneity argument.

\subsection*{Open problems. }
\begin{enumerate}
\item Does the inequality (\ref{eq:TMmod}) hold for general bounded $\Omega$,
all potentials $V$ of the local Kato class $\mathcal{K}_{2}$ and
all $p\in(0,\infty)$, as long as the constraint functional $Q$ remains
weakly coersive? 
\item When $\Omega=\mathbb{R}^{2}$, inequality (\ref{eq:TMmod}) with $Q(u)=\|\nabla u\|_{2}^{2}$
is false, since the form $\|\nabla u\|_{2}^{2}$ on the whole $\mathbb{R}^{2}$
admits a ground state 1. On the other hand, the inequality holds when
$Q(u)=\|\nabla u\|_{2}^{2}+\|u\|_{2}^{2}$ (Ruf, \cite{Ruf}). Furthermore,
as it is shown in \cite{MansanIneq}, inequality (\ref{eq:TMmod})
with $Q(u)=\|\nabla u\|_{2}^{2}$ holds for a simply connected (generally
unbounded) domain $\Omega\subset\R^{2}$ if and only if $\|\nabla u\|_{2}^{2}\ge\lambda\|u\|_{2}^{2}$
with some$\lambda>0$. In both results the condition is $L^{2}$-
coercivity, $Q(u)\ge C\|u\|_{2}^{2}$. It is natural then to ask,
for unbounded domains, if there are weaker coercivity conditions on
$Q$ that yield (\ref{eq:TMmod})? 
\item Since Hardy-Sobolev-Maz\textquoteright{}ya inequalities can be derived
from Caffarelli-Kohn-Nirenberg inequalities (\cite{CKN}) via the
ground state transform, it is natural to ask what could be an analog
of Caffarelli-Kohn-Nirenberg inequalities related to the remainder
estimates of the Hardy-Moser-Trudinger type.
\item Our reduction to the radial case is of tentative character, as it
is based on rearrangements specific to the hyperbolic plane which
resulted in a restrictive coniditon of weighted monotonicity on the
potential. Perhaps more general rearrangements satisfying Polia-Szegö
and Hardy-Littlewood inequalities (see \cite{MartinMilman}) can be
used to relax the monotonicity condition on the potential.\end{enumerate}

\end{document}